\documentclass[11pt]{article}

\usepackage{amsmath,amssymb,amsthm,amscd,eucal}
\usepackage{latexsym}
\usepackage{graphicx}
\usepackage[usenames,dvipsnames,svgnames,table]{xcolor}
\usepackage{tikz}
\usepackage{color}
\usepackage{url}
 
\newtheorem{thm}[equation]{Theorem}
\newtheorem{cor}[equation]{Corollary}
\newtheorem{lemma}[equation]{Lemma}
\newtheorem{prop}[equation]{Proposition}

\newtheorem*{conj*}{Conjecture}

\theoremstyle{definition}

\newtheorem{remark}[equation]{Remark}
\newtheorem{remarks}[equation]{Remarks}

\newtheorem{exams}[equation]{Examples}

\numberwithin{equation}{section}

\renewcommand{\ker}{\mathsf{ker}}

\newcommand{\ZZ}{\mathbb{Z}}  
\newcommand{\CC}{\mathbb{C}}

\newcommand{\A}{\mathsf{A}}

\newcommand{\Hf}{\mathcal{H}(f)} 



\def\aut{ \mathsf {Aut_\CC}(\Hf)} 

\def\cent1{\mathsf{Z}(\A_1)}

\def\degg{\, \mathsf{deg} \,}

\usepackage[letterpaper,margin=1.38in]{geometry}

\begin{document}  
\title{Non-Noetherian generalized Heisenberg algebras}
\author{Samuel A.\ Lopes\thanks{The author was partially supported by CMUP (UID/MAT/00144/2013), which is funded by FCT (Portugal) with national (MEC) and European structural funds through the programs FEDER, under the partnership agreement PT2020.}}

\date{}
\maketitle

\vspace{-.25 truein}  
\begin{abstract} 
In this note we classify the non-Noetherian generalized Heisenberg algebras $\Hf$ introduced in \cite{LZ15}. In case $\degg f>1$, we determine all locally finite and also all locally nilpotent derivations of $\Hf$ and describe the automorphism group of these algebras.
\end{abstract}   

\noindent 
\textbf{MSC Numbers (2010)}: Primary 16W25, 16W20; Secondary 16P40, 16S36.\hfill \newline
\textbf{Keywords}: generalized Heisenberg algebra; ambiskew polynomial ring; weak generalized Weyl algebra; automorphism group; derivation; Noetherian.

\maketitle

\section{Introduction}\label{S:intro}

Fix a polynomial $f\in\CC[h]$. The \textit{generalized Heisenberg algebra} $\Hf$ is the unital associative $\CC$-algebra with generators $x$, $y$, $h$ satisfying the relations:
\begin{equation}\label{E:intro:def}
hx=xf(h), \quad yh=f(h)y, \quad yx-xy=f(h)-h.
\end{equation}
See \cite{LZ15} and the references therein for information on how these algebras first appeared and on their applications to theoretical physics.

Ambiskew polynomial rings were introduced by Jordan over a series of papers (see the references in~\cite{dJ00}), but for our purposes the best suited definition is the one found in \cite{dJ00}, which we briefly recall. Let $\sigma$ be an endomorphism of a commutative $\CC$-algebra $B$, $c\in B$ and $p\in\CC$. The ambiskew polynomial ring $R(B, \sigma, c, p)$ is the $\CC$-algebra generated by $B$ and two indeterminates, $x$ and $y$, subject to the relations 
\begin{equation*}
bx=x\sigma(b),\quad yb=\sigma(b)y, \quad yx-pxy=c,\quad \mbox{for all $b\in B$.}
\end{equation*}
On comparing these relations with those in \eqref{E:intro:def}, one immediately sees that 
\begin{equation}\label{E:intro:ambi}
\Hf\cong R(\CC[h], \sigma, f(h)-h, 1),
\end{equation}
where $\sigma:\CC[h]\rightarrow\CC[h]$ is the algebra endomorphism given by $\sigma(h)=f(h)$. In particular, one can see that there is an overlap between the generalized Heisenberg algebras defined above and (generalized) down-up algebras (see Corollary~\ref{C:n:gdua} below).

The algebras $\Hf$ can also be seen as weak generalized Weyl algebras over a polynomial algebra in two variables, in the sense of \cite{LMZ15}, a construction which includes the generalized Weyl algebras introduced by V.V.\ Bavula in \cite{vB92}.  
In \cite{LZ15} the authors determine a basis for $\Hf$ over $\CC$, compute the center of $\Hf$, solve the isomorphism problem for this family of algebras and classify all the finite-dimensional irreducible representations of $\Hf$.

In this note we show that $\Hf$ is (right or left) Noetherian if and only if $\degg f=1$ and that $\Hf$ is isomorphic to a generalized down-up algebra if and only if $\degg f\leq 1$. For this reason, we then concentrate on the case where $\degg f>1$ and determine  the locally nilpotent and the locally finite derivations of $\Hf$, all $\ZZ$-gradings of $\Hf$ and describe the automorphism group of $\Hf$. In particular, we obtain the following results in case $\degg f>1$:
\begin{itemize}
\item[(i)]  $\Hf$ in neither right nor left Noetherian (Proposition~\ref{P:n:n});
\item[(ii)] $\Hf$ admits a unique (up to an integer multiple) nontrivial $\ZZ$-grading, in which $x$ has degree $1$, $y$ has degree $-1$ and $h$ has degree $0$ (Corollary~\ref{C:lfd:zgrad});
\item[(iii)] the automorphism group of $\Hf$ is abelian: it is isomorphic to $\CC^*\times \mathsf{C}$, where $\mathsf{C}$ is a finite cyclic group whose order divides $(\degg f) -1$ (Theorem~\ref{T:aut:main}).
\end{itemize}
 
In Section~\ref{S:n} of the paper we review some properties of $\Hf$ which have been established in \cite{LZ15}, determine when $\Hf$ is Noetherian and when it is isomorphic to a generalized down-up algebra, while in Section~\ref{S:lfd:sg} we introduce a useful commutative subalgebra of $\Hf$, which is a maximal commutative subalgebra if $\degg f>1$. Assuming that $\degg f>1$, we then investigate the locally finite and the locally nilpotent derivations of $\Hf$ and also its $\ZZ$-gradings in Section~\ref{S:lfd}, and in the final section, Section~\ref{S:aut}, we describe the automorphism group of $\Hf$ and show that it is always an abelian group generated by the automorphisms which fix $h$ and the automorphisms which fix $x$.

We make use of the commutator notation $[a, b]=ab-ba$. The sets of integers, nonnegative integers and positive integers are denoted by $\ZZ$, $\ZZ_{\geq 0}$ and $\ZZ_{> 0}$, respectively. The field of complex numbers is denoted by $\CC$, and the multiplicative group of nonzero complex numbers is denoted by $\CC^*$. For a polynomial $g\in\CC[h]$, $\degg g$ will always denote the degree of $g$ as a polynomial in $h$. 

Throughout the paper, $\sigma:\CC[h]\rightarrow\CC[h]$ is the algebra endomorphism given by $\sigma(h)=f(h)$. For any function $\phi: X\rightarrow X$, we will use the notation $\phi^k$ to mean the $k$-th power of $\phi$ with respect to composition. In particular, $\phi^0$ denotes the identity on the set $X$.

\section{The Noetherian property}\label{S:n}

Below we record a few results from \cite{LZ15} which will be useful in the course of this paper. As usual, $Z(\Hf)$ denotes the center of $\Hf$.

\begin{lemma}[{\cite[Lemma 1, Lemma 2, Theorem 4]{LZ15}}]  \label{L:n:LZ} 
Let $f\in\CC[h]$. Then:
\begin{itemize}
\item[\textup{(a)}] The set $\{ x^i h^j y^k \mid i, j, k\in \ZZ_{\geq 0}  \}$ is a basis of $\Hf$. 
\item[\textup{(b)}] The algebra $\Hf$ is a domain if and only if $\degg f\geq 1$.
\item[\textup{(c)}] The center of $\Hf$ contains the polynomial algebra $\CC[z]$, where $z=xy-h=yx-f(h)$. If $\degg f\neq 1$, then $Z(\Hf)=\CC[z]$.
\end{itemize}
\end{lemma}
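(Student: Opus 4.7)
The plan is to prove the three parts of the lemma in sequence, guided by the ambiskew polynomial realization in~\eqref{E:intro:ambi}.

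For part~(a), I would set up a normal-form argument: the relations $hx=xf(h)$ and $yh=f(h)y$ let us push every occurrence of $h$ in a monomial between the $x$'s and the $y$'s, while $yx=xy+f(h)-h$ moves each $y$ past each $x$ to the right. A routine induction on word length then shows that the proposed set spans $\Hf$. For $\CC$-linear independence, the cleanest route is to exhibit a faithful $\Hf$-module modelled on the ambiskew polynomial structure, or equivalently to appeal to the general PBW-type theorem for ambiskew polynomial rings, which produces a basis of exactly this form.

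For part~(b), if $\degg f=0$ write $f(h)=c\in\CC$; then $hx=xf(h)=cx$ gives $(h-c)x=0$, and both factors are nonzero by~(a), so $\Hf$ is not a domain. Conversely, when $\degg f\geq 1$ the endomorphism $\sigma$ is injective on $\CC[h]$. Filtering $\Hf$ by $y$-degree collapses the lower-order $f(h)-h$ term in the relation $yx=xy+f(h)-h$, and one identifies the associated graded algebra with the iterated skew polynomial ring $\CC[h][x;\sigma][y;\sigma']$, where $\sigma'$ extends $\sigma$ by fixing $x$; this is a domain, hence so is $\Hf$.

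For part~(c), the identity $xy-h=yx-f(h)$ is just the rearranged third defining relation. That $z$ lies in $Z(\Hf)$ follows from the direct computations $[z,h]=[z,x]=[z,y]=0$ using the defining relations, and combined with the $\CC$-linear independence of $1,z,z^2,\ldots$ supplied by~(a) this gives $\CC[z]\subseteq Z(\Hf)$. For the reverse inclusion when $\degg f\neq 1$, I would equip $\Hf$ with the $\ZZ$-grading $\degg x=1$, $\degg y=-1$, $\degg h=0$ (well defined since each defining relation is homogeneous) and decompose any central element into its homogeneous central components. Commuting a homogeneous component with $h$, $x$, and $y$, using the derived identities $h\cdot x^i=x^i f^i(h)$ and $y^k\cdot p(h)=p(f^k(h))y^k$, produces a system of functional equations on its coefficients. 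The main obstacle is the bookkeeping in these mixed-degree commutator calculations, but everything ultimately rests on the elementary observation that a nonconstant $p\in\CC[h]$ with $p(f(h))=p(h)$ would force $\degg(p\circ f)=(\degg p)(\degg f)=\degg p$, hence $\degg f=1$; this degree-growth mechanism collapses the homogeneous central elements onto $\CC[z]$ whenever $\degg f\neq 1$.
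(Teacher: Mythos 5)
First, a point of comparison: the paper offers no proof of this lemma at all --- it is quoted verbatim from \cite{LZ15} (Lemma 1, Lemma 2, Theorem 4) --- so there is no internal argument to measure yours against; what follows assesses your proposal on its own terms. Parts (a) and (b) are sound in outline. The rewriting argument gives spanning; linear independence is legitimately outsourced to the PBW-type freeness of ambiskew polynomial rings, though you should be aware that this is exactly where the content of (a) lives and that with $\sigma$ non-injective (e.g.\ $f$ constant, so $\sigma(h)\in\CC$) it is not a formality. For (b), the zero-divisor $(h-c)x=0$ when $f=c\in\CC$ is correct, and the $y$-filtration argument works: the associated graded is the iterated skew extension you describe, $\sigma'$ is a well-defined injective endomorphism of $\CC[h][x;\sigma]$ when $\degg f\geq 1$, and the identification of the associated graded with that extension uses the basis from (a).

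Part (c) has a genuine gap in the converse inclusion $Z(\Hf)\subseteq\CC[z]$. Your degree-growth observation --- that $p(f(h))=p(h)$ for nonconstant $p$ forces $\degg f=1$ --- delivers only the first step. Concretely, for $\degg f>1$ one reduces a central element to a degree-zero one $\theta=\sum_{k\leq N}x^kg_k(h)y^k$ by commuting with $h$; centrality against $x$ then gives $x\sigma(\theta)=\theta x=x\theta$, hence $\sigma(\theta)=\theta$ (note this cancellation of $x$ already uses that $\Hf$ is a domain), and the resulting recursion $\sigma(g_k)+(\sigma^{k+1}(h)-h)g_{k+1}=g_k$ forces only the top coefficient $g_N$ to be constant. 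Nothing ``collapses onto $\CC[z]$'' at that point: one must subtract $g_Nz^N$ and descend by induction on $N$, and that descent is the actual proof rather than bookkeeping around your mechanism. Separately, the case $\degg f=0$ (which the statement includes, since it assumes only $\degg f\neq 1$) is not covered by your outline: there $\sigma^k(h)$ is the constant $f$ for all $k\geq 1$, so commuting a positive-degree component $\sum_i x^{i+\ell}g_iy^i$ with $h$ kills only the $i=0$ term, and $\Hf$ is not a domain, so neither the cancellation of $x$ nor the factorial-closure style arguments are available; eliminating the remaining terms requires commuting with $x$ and $y$ explicitly. As written, part (c) is a plausible plan whose decisive steps are missing.
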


\begin{remarks}\hfill
\begin{enumerate}
\item Identifying $\Hf$ with the ambiskew polynomial ring $R(\CC[h], \sigma, f(h)-h, 1)$ as in \eqref{E:intro:ambi}, it follows that $\Hf$ is conformal, as defined in \cite[Section 2.3]{dJ00}, and the corresponding Casimir element is precisely the central element $z=xy-h$ defined above.
\item Suppose $f\in\CC$. Then by considering the generators $-x$, $y$ and $h-f$, we see that $\Hf\cong R(\CC[h], \sigma, h, 1)$, with $\sigma=0$, and from \cite[Theorem 7.10]{dJ00} we conclude that $\Hf$ is a prime ring. Thus by Lemma~\ref{L:n:LZ}(b), $\Hf$ is a prime ring for any $f\in\CC[h]$. 
\item Since the center of $\Hf$ contains the polynomial algebra $\CC[z]$ and $\Hf$ has countable dimension over $\CC$, it follows from Dixmier's version of Schur's Lemma that $\Hf$ is never primitive.
\end{enumerate}
\end{remarks}

There is an order two anti-automorphism of $\Hf$, denoted by $\iota$, that fixes $h$ and interchanges $x$ and $y$: 
\begin{equation}\label{E:iota}
\iota: \Hf\rightarrow\Hf, \quad\quad x\mapsto y, \quad y\mapsto x, \quad h\mapsto h.
\end{equation}
Hence $\Hf$ is isomorphic to its opposite algebra $\Hf^{\text{op}}$.

\begin{prop}\label{P:n:n} 
The algebra $\Hf$ is right (respectively, left) Noetherian if and only if $\degg f=1$.
\end{prop}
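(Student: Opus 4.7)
The plan is to treat the two implications separately.

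For $\degg f = 1$, $\sigma$ is an automorphism of $\CC[h]$, so I would realize $\Hf$ as the iterated Ore extension $\CC[h][x;\sigma^{-1}][y;\tau,\delta]$, where $\tau$ is the automorphism of $\CC[h][x;\sigma^{-1}]$ with $\tau(h) = f(h)$ and $\tau(x) = x$, and $\delta$ is the $\tau$-derivation determined by $\delta(h) = 0$ and $\delta(x) = f(h) - h$. Verifying that $\tau$ is compatible with the Ore relation $xh = \sigma^{-1}(h)\,x$ and that $\delta$ is well defined amounts to routine polynomial identities in $\CC[h]$, and reading off the resulting relations recovers those of $\Hf$. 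Both Ore extensions are taken with respect to automorphisms over a Noetherian base, so Hilbert's basis theorem yields the Noetherian property on both sides.

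For the converse, the anti-automorphism $\iota$ of~\eqref{E:iota} identifies the left- and right-Noetherian properties of $\Hf$, so it suffices to exhibit a non-finitely-generated left ideal when $\degg f \neq 1$. Since $f(h) - h$ is then non-constant, it has a complex root $c$, and the substitution $h \mapsto h - c$ gives an isomorphism $\Hf \cong \mathcal{H}(\tilde f)$ with $\tilde f(h) = f(h+c) - c$ satisfying $\tilde f(0) = 0$; I therefore assume $f(0) = 0$ from here on.

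I would then consider the chain of left ideals $L_N := \sum_{k=0}^{N} \Hf \cdot hy^k$ and show that $hy^{N+1} \notin L_N$. The relation $yh = \sigma(h)y$ iterates to $y^\ell h = \sigma^\ell(h)\, y^\ell$. If $hy^{N+1} = \sum_{k=0}^{N} r_k \cdot hy^k$, expanding each $r_k$ in the PBW basis of Lemma~\ref{L:n:LZ}(a) and normalizing with this identity shows that the coefficient of $x^0 y^{N+1}$ on the right lies in the ideal $(\sigma(h), \sigma^2(h), \ldots, \sigma^{N+1}(h))\subseteq \CC[h]$, while on the left it equals $h$. When $\degg f = 0$, the shift forces $f \equiv 0$, so $\sigma^m(h) = 0$ for $m \geq 1$ and the ideal is zero---an immediate contradiction. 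When $\degg f \geq 2$, writing $f(h) = h\,g(h)$ and using $\sigma^m(h) = \sigma^{m-1}(h)\cdot g(\sigma^{m-1}(h))$ yields by induction that $f \mid \sigma^m(h)$ for every $m \geq 1$; the ideal therefore collapses to $(f)$, which cannot contain the linear polynomial $h$ once $\degg f \geq 2$.

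The main obstacle I anticipate is the PBW bookkeeping used to extract the $x^0 y^{N+1}$ coefficient; once that step is in place, the divisibility argument closes both cases, the chain $L_N$ strictly ascends, and $\bigcup_N L_N$ is the required non-finitely-generated left ideal.
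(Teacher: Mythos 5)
Your proposal is correct and follows essentially the same route as the paper: the same shift of variable to arrange $f(0)=0$, the same ascending chain of left ideals $\sum_{k=0}^{N}\Hf\, hy^{k}$, and the same extraction of the $x^{0}y^{N+1}$ PBW coefficient followed by the divisibility argument $f\mid\sigma^{m}(h)$. The only (minor) divergence is in the easy direction, where you realize $\Hf$ as an iterated Ore extension by automorphisms instead of invoking, as the paper does, its structure as a generalized Weyl algebra over a polynomial ring in two variables --- both are one-line appeals to the skew Hilbert basis theorem.
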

\begin{proof}
If $\degg f=1$ then $\Hf$ is a generalized Weyl algebra over a polynomial ring in two variables, and thus it is right and left Noetherian. So assume that $\degg f\neq 1$. In particular, $f(h)-h$ has some root $\alpha\in\CC$. Let $F(h)=f(h+\alpha)-\alpha$. Then $\degg F=\degg f$ (here we assume the zero polynomial has degree $0$) and $F(h)\in h\CC[h]$. Moreover, $F(h-\alpha)=f(h)-\alpha$ and then $\Hf\cong \mathcal{H}(F)$ by \cite[Lemma 3]{LZ15}. So there is no loss in assuming that $f(h)\in h\CC[h]$. By the isomorphism $\Hf\cong \Hf^{\text{op}}$ it will be enough to show that $\Hf$ is not left Noetherian.

For each $n\in\ZZ_{\geq 0}$ define the left ideal 
\begin{equation*}
I_n=\sum_{i=0}^n \Hf hy^i.
\end{equation*}
Then $I_n\subseteq I_{n+1}$ for all $n\geq 0$ and we finish the proof by showing that these inclusions are strict. Note that by Lemma~\ref{L:n:LZ}(a),
\begin{equation*}
\Hf=\bigoplus_{j, k\geq 0}x^j\CC[h]y^k.
\end{equation*}
Given $j, k\geq 0$ and $g(h)\in\CC[h]$, we have $x^j g(h)y^k hy^i=x^j g(h)\sigma^k(h)y^{k+i}$. Assume, by way of contradiction, that $hy^{n+1}\in I_n$. Then there exist $g_i(h)\in\CC[h]$, $i=0, \ldots, n$, such that $\displaystyle hy^{n+1}=\sum_{i=0}^n g_i (h)\sigma^{n+1-i}(h)y^{n+1}$. It follows by Lemma~\ref{L:n:LZ}(a)  that
\begin{equation}\label{E:P:n:n}
h=\sum_{i=0}^n g_i (h)\sigma^{n+1-i}(h).
\end{equation}
As by hypothesis $\sigma(h)=f(h)\in h\CC[h]$, one can deduce that $\sigma^{n+1-i}(h)\in f(h)\CC[h]$ for all $0\leq i\leq n$ and  \eqref{E:P:n:n}
then implies that $h\in f(h)\CC[h]$, which is a contradiction since under our hypothesis either $f(h)=0$ or $\degg f>1$. This proves that $hy^{n+1}\not\in I_n$ for any $n\geq 0$ and hence $\left\{ I_n \right\}_{n\geq 0}$ is a strict ascending chain of left ideals of $\Hf$.
\end{proof}

\begin{remark}
The case $f\in\CC$ of Proposition~\ref{P:n:n} follows also from  \cite[Corollary 7.3]{dJ00}, which applies when $\sigma$ is not injective. In terms of the endomorphism $\sigma$, Proposition~\ref{P:n:n} could be restated as: The algebra $\Hf$ is right (respectively, left) Noetherian if and only if $\sigma$ is an automorphism.
\end{remark}

We recall that a generalized down-up algebra $L(g, r, s, \gamma)$, given by the parameters $g\in\CC[H]$ and $r, s, \gamma\in\CC$, is defined as the unital associative $\CC$-algebra generated by $d$, $u$ and $H$, subject to the relations:
\begin{equation*}
dH-rHd+\gamma d=0,\quad
Hu-ruH+\gamma u=0,\quad
du-sud+g(H)=0.
\end{equation*}
Generalized down-up algebras were defined in~\cite{CS04} as generalizations of the down-up algebras introduced by Benkart and Roby in \cite{BR98}. Generalized down-up algebras include all down-up algebras, the algebras similar to the enveloping algebra of $\mathfrak{sl}_{2}$ defined by Smith~\cite{spS90}, Le Bruyn's conformal $\mathfrak{sl}_{2}$ enveloping algebras~\cite{lLB95} and Rueda's algebras similar to the enveloping algebra of $\mathfrak{sl}_{2}$~\cite{sR02}.

\begin{cor}\label{C:n:gdua}
The algebra $\Hf$ is isomorphic to a generalized down-up algebra if and only if $\degg f\leq 1$.
\end{cor}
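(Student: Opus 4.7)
The plan is to handle the two directions separately.

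For the direction $\degg f \leq 1 \Rightarrow \Hf$ is a generalized down-up algebra, I would give explicit isomorphisms. When $f(h) = ah + b$ with $a \in \CC^*$, the assignment $d \mapsto x$, $u \mapsto y$, $H \mapsto h$ converts the three defining relations of the GDU $L\bigl((a-1)H + b,\, a^{-1},\, 1,\, ba^{-1}\bigr)$ into precisely the defining relations of $\Hf$; the check uses $hx = x(ah + b)$, $yh = (ah + b)y$, and $yx - xy = (a-1)h + b$. When $f(h) = b$ is a constant, the alternative assignment $d \mapsto y$, $u \mapsto x$, $H \mapsto h$ realizes $\Hf$ as $L(H - b,\, 0,\, 1,\, -b)$. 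In both cases the PBW basis $\{u^i H^j d^k\}$ of the GDU is mapped bijectively onto the basis $\{x^i h^j y^k\}$ of $\Hf$ from Lemma~\ref{L:n:LZ}(a), confirming that the surjective homomorphism is an isomorphism.

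For the converse, assume $\degg f \geq 2$ and, for contradiction, that $\Hf \cong L$ where $L = L(g, r, s, \gamma)$. By Lemma~\ref{L:n:LZ}(b), $\Hf$ is a domain, so $L$ is too. The central step is to deduce $rs \neq 0$. If $r = 0$, the GDU relation $dH - rHd + \gamma d = 0$ reduces to $d(H + \gamma) = 0$, contradicting the domain property since $d$ and $H + \gamma$ are nonzero and linearly independent in the PBW basis of $L$. If $s = 0$ and $r \neq 0$, then $du = -g(H)$, and using the skew-commutation $u p(H) = p\bigl(r^{-1}(H + \gamma)\bigr) u$ for $p \in \CC[H]$ (derived from $Hu = ruH - \gamma u$), I would exhibit the explicit zero divisor $a = ud + g\bigl(r^{-1}(H + \gamma)\bigr)$: its $u^1 H^0 d^1$ component is nonzero so $a \neq 0$, yet $au = u(du) + g\bigl(r^{-1}(H + \gamma)\bigr) u = -u g(H) + g\bigl(r^{-1}(H + \gamma)\bigr) u = 0$, again contradicting the domain property. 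With $rs \neq 0$ secured, $L$ is visibly an iterated Ore extension $\CC[H][u; \tau][d; \sigma, \delta]$ in which both $\tau$ and $\sigma$ are algebra automorphisms, and therefore right and left Noetherian; transporting this through the isomorphism forces $\Hf$ to be Noetherian, contradicting Proposition~\ref{P:n:n} since $\degg f > 1$.

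The main technical obstacle is the $s = 0$, $r \neq 0$ subcase: spotting the correct element $a$ requires carefully exploiting the skew-commutation between $u$ and polynomials in $H$, and one must separately verify that $a$ is nonzero by inspecting its expression in the PBW basis. All other steps amount to short direct computations or appeals to standard iterated Ore extension theory.
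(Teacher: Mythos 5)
Your proof is correct and follows essentially the same route as the paper: an explicit isomorphism onto a generalized down-up algebra when $\degg f\leq 1$, and, when $\degg f>1$, the observation that $\Hf$ is a non-Noetherian domain while a generalized down-up algebra is a domain if and only if it is Noetherian. The only difference is that you re-derive this last fact (Propositions 2.5 and 2.6 of \cite{CS04}) from scratch, via your zero-divisor computations and the iterated Ore extension structure, whereas the paper simply cites it.
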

\begin{proof}
Suppose first that $\degg f\leq 1$, say $f(h)=ah+b$ for $a, b\in\CC$. Then it is straightforward to verify that $\Hf\cong L(H-f(H), a, 1, -b)$, under an isomorphism that sends $x$, $y$ and $h$ to $u$, $d$ and $H$, respectively. Conversely, suppose that $\degg f>1$. Then by Proposition~\ref{P:n:n} and Lemma~\ref{L:n:LZ}(b), $\Hf$ is a non-Noetherian domain. Hence $\Hf$ cannot be isomorphic to a generalized down-up algebra, as a generalized down-up algebra is a domain if and only if it is Noetherian, by Propositions 2.5 and 2.6 of \cite{CS04}.
\end{proof}

In view of this result, we will henceforth focus most of our attention on the generalized Heisenberg algebras $\Hf$ with $f\in\CC[h]$ such that $\degg f>1$.

\section{The commutative algebra $\Hf_0$}\label{S:lfd:sg}

In this short section we record a few useful formulas for computing in $\Hf$ and then explore an interesting commutative subalgebra of $\Hf$.

\begin{lemma}\label{L:c:c} Let $k\in\ZZ_{\geq 0}$ and $g\in\CC[h]$. Then the following hold:
\begin{itemize}
\item[\textup{(a)}] $[y, x^k]=x^{k-1}(\sigma^k (h)-h)$;
\item[\textup{(b)}] $[y^k, x]=(\sigma^k (h)-h)y^{k-1}$;
\item[\textup{(c)}] $(x^k g y^k)x=x(x^k \sigma(g) y^k + x^{k-1} (\sigma^k (h)-h)g y^{k-1})$;
\item[\textup{(d)}] $y(x^k g y^k)=(x^k \sigma(g) y^k + x^{k-1} (\sigma^k (h)-h)g y^{k-1})y$;
\item[\textup{(e)}] $x^k g y^k$ commutes with $x^j \tilde g y^j$ for all $\tilde g\in\CC[h]$ and all $j\in\ZZ_{\geq 0}$.
\end{itemize}
\end{lemma}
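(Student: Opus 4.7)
The plan is to prove (a)--(d) by direct computation from the defining relations $hx = x\sigma(h)$, $yh = \sigma(h)y$, $[y,x] = \sigma(h) - h$, together with the fact that $\sigma$ is a $\CC$-algebra endomorphism of $\CC[h]$, which immediately yields $g x = x\sigma(g)$ and $y g = \sigma(g) y$ for every $g \in \CC[h]$. Part (e) will then follow by a clean induction that uses (c) and (d) as its moving parts.

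For (a), I would induct on $k$. The base cases are trivial ($k=0$) or just the defining relation ($k=1$). For the inductive step I would expand $[y, x^{k+1}] = [y, x^k]\,x + x^k[y, x]$, apply the inductive hypothesis, and then push the polynomial $\sigma^k(h) - h$ past the final $x$ via $g(h)x = x\sigma(g)$; the two resulting terms telescope to $x^k(\sigma^{k+1}(h) - h)$. Part (b) is entirely symmetric, and I would simply apply the anti-automorphism $\iota$ of \eqref{E:iota} to (a), noting that $\iota$ reverses products, fixes every element of $\CC[h]$, and swaps $x$ with $y$.

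For (c), I would rewrite $(x^k g y^k)\,x = x^k g (y^k x)$, substitute $y^k x = xy^k + (\sigma^k(h) - h) y^{k-1}$ from (b), and push $g$ past $x$ using $g x = x\sigma(g)$; factoring an $x$ on the left and using the commutativity of $\CC[h]$ yields the stated identity. Part (d) follows from (c) by applying $\iota$ once more, noting that $\iota$ fixes every element of the form $x^k g y^k$.

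The main interesting step is (e), where the base case $k = 0$ is direct: $g \in \CC[h]$ commutes with $x^j \tilde g y^j$ because moving $g$ past $x^j$ on one side and past $y^j$ on the other both produce $\sigma^j(g)$, which commutes with $\tilde g$ inside $\CC[h]$. For the inductive step at level $k \geq 1$, I would set $E_k(g) := x^k g y^k = x\cdot E_{k-1}(g)\cdot y$ and compute both $E_k(g) E_j(\tilde g)$ and $E_j(\tilde g) E_k(g)$, using (d) on the first product to move $y$ through $E_j(\tilde g)$ and (c) on the second to move $E_j(\tilde g)$ past $x$. The outputs of (c) and (d) are the same expression $A = x^j \sigma(\tilde g) y^j + x^{j-1}(\sigma^j(h) - h)\tilde g y^{j-1}$, a sum of elements of the form $x^{j'}(\cdot)y^{j'}$; the inductive hypothesis then gives that $E_{k-1}(g)$ commutes with $A$, and both products collapse to the common expression $x A E_{k-1}(g) y$. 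The only mild obstacle is bookkeeping in edge cases where a negative exponent would appear (e.g.\ when $j=0$), resolved by interpreting the offending summand as zero, consistent with $\sigma^0(h) - h = 0$.
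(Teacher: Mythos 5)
Your proposal is correct and follows essentially the same route as the paper: (c) via part (b), (d) and (b) via the anti-automorphism $\iota$, and (e) by induction on $k$ using (c) and (d) to shuttle the middle factor past $x$ and $y$. The only difference is that you prove (a) directly by induction, whereas the paper simply cites formulas (6a)--(6b) of \cite{dJ00}; your telescoping argument for that step is fine.
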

\begin{proof}
Parts (a) and (b) have been established in \cite{dJ00}, formulas (6a)--(6b). We prove part (c) using (b):
\begin{align*}
(x^k g y^k)x&=x^k g xy^k +x^k g [y^k,x] \\
&=x^{k+1} \sigma(g) y^k +x^k g (\sigma^k (h)-h)y^{k-1}\\
&=x(x^{k} \sigma(g) y^k +x^{k-1} (\sigma^k (h)-h)g y^{k-1}).
\end{align*}
Formula (d) follows from applying the anti-automorphism $\iota$ of \eqref{E:iota} to (c).

Finally, we prove (e) by induction on $k$, the case $k=0$ being trivial:
\begin{equation*}
g(x^j \tilde g y^j)=x^j \sigma^j(g)\tilde g y^j=x^j \tilde g \sigma^j(g) y^j=(x^j \tilde g  y^j) g. 
\end{equation*}
Now suppose (e) holds for a certain $k\geq 0$. Thus we have:
\begin{align*}
 (x^{k+1} g y^{k+1})(x^j \tilde g y^j) &=  (x^{k+1} g y^{k})y(x^j \tilde g y^j)\\
 &=  x(x^{k} g y^{k})(x^j \sigma(\tilde g) y^j + x^{j-1} (\sigma^j (h)-h)\tilde g y^{j-1})y \quad\quad \mbox{by (d)}\\
 &=  x(x^j \sigma(\tilde g) y^j + x^{j-1} (\sigma^j (h)-h)\tilde g y^{j-1})(x^{k} g y^{k})y \quad\quad \mbox{($\ast$)}\\
 &=  (x^j \tilde g y^j)x(x^{k} g y^{k})y \quad\quad \mbox{by (c)}\\
 &=  (x^j \tilde g y^j)(x^{k+1} g y^{k+1}),
\end{align*}
where ($\ast$) follows from the induction hypothesis. So (e) holds for all $k\in\ZZ_{\geq 0}$.
\end{proof}

There is an obvious grading of $\Hf$ relative to which $x$ has degree $1$, $y$ has degree $-1$ and $h$ has degree $0$. We denote the corresponding homogeneous subspaces by $\Hf_\ell$, for $\ell\in\ZZ$, so that 
\begin{equation}\label{E:zgrading}
\Hf=\bigoplus_{\ell\in\ZZ} \Hf_\ell, \quad \quad \mbox{with \quad $\displaystyle \Hf_\ell=\bigoplus_{i-k=\ell}\CC\, x^i \CC[h] y^k$.}
\end{equation}
We call this the \textit{standard grading} of $\Hf$, and, whenever we mention a homogeneous component or element of $\Hf$, we will always be referring to this standard grading. 

The subalgebra $\Hf_0$ has basis $\{x^k h^j y^k \mid k, j\geq 0\}$ and 
$\Hf_\ell=x^\ell \, \Hf_0$ if $\ell\geq 0$; $\Hf_\ell= \Hf_0 \, y^{-\ell}$ if $\ell\leq 0$. Thus we have the decomposition
\begin{equation*}
\Hf=\bigoplus_{\ell>0} x^\ell\,\Hf_0 \oplus \Hf_0 \oplus \bigoplus_{\ell>0} \Hf_0 \, y^\ell.
\end{equation*}

\begin{prop}\label{P:lfd:sg:hoiscom}
The subalgebra $\Hf_0$ is commutative. If $\degg f>1$, then $\Hf_0$ is a maximal commutative subalgebra of $\Hf$ which strictly contains $\CC[z, h]$, the polynomial subalgebra of $\Hf$ generated by $h$ and the central element $z=xy-h$.
\end{prop}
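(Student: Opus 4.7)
The plan has four parts: commutativity of $\Hf_0$, the containment $\CC[z,h] \subseteq \Hf_0$, strictness of this containment, and maximality; the hard part will be strictness. For commutativity I would invoke Lemma~\ref{L:c:c}(e) applied to pairs of basis elements $x^k h^a y^k$ and $x^j h^b y^j$ of $\Hf_0$. For the containment, note that $h \in \Hf_0$ and $z = xy - h \in M_1 \oplus M_0 \subseteq \Hf_0$, so the subalgebra they generate lies inside $\Hf_0$.

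The strict inclusion is the main obstacle; my plan is to show that $xhy \in \Hf_0$ does not belong to $\CC[z,h]$. I would introduce the ascending filtration $F_n = \spann\{x^i h^j y^k : i+k \leq n\}$ on $\Hf$, which the commutator formulas in Lemma~\ref{L:c:c} show is compatible with multiplication. Since $z \in F_2 \setminus F_1$, the leading $(xy)^i h^j$ summand of $z^i h^j$ has filtration degree exactly $2i$, and so $\CC[z,h] \cap F_2 = \CC[h] \oplus z\CC[h]$. If $xhy = p(h) + q(h)z$ for some $p, q \in \CC[h]$, I would rewrite $q(h)z = x\sigma(q)(h)y - q(h)h$ using the relation $q(h)x = x\sigma(q)(h)$, and match the $M_0$ and $M_1$ components of $\Hf_0$. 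This forces $q(f(h)) = h$, which cannot hold when $\degg f > 1$: either $q$ is a nonzero constant and $q(f(h))$ is that same constant, or $\degg q \geq 1$ and $\degg q(f(h)) = \degg q \cdot \degg f > 1$.

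For maximality, suppose $a \in \Hf$ centralizes $\Hf_0$, and decompose $a = \sum_{\ell \in \ZZ} a_\ell$ using the standard grading. Since $h \in \Hf_0$ is homogeneous of degree $0$, the identity $[a, h] = 0$ splits degreewise into $[a_\ell, h] = 0$ for every $\ell$. For $\ell \geq 0$ I would write $a_\ell = x^\ell b_\ell$ with $b_\ell \in \Hf_0$; using $hx^\ell = x^\ell \sigma^\ell(h)$ together with commutativity of $\Hf_0$, a short computation gives $[a_\ell, h] = x^\ell b_\ell (h - \sigma^\ell(h))$. For $\ell \geq 1$ one has $h - \sigma^\ell(h) \neq 0$ because $\degg \sigma^\ell(h) = (\degg f)^\ell > 1$; since $\Hf$ is a domain by Lemma~\ref{L:n:LZ}(b), this forces $b_\ell = 0$. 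The case $\ell < 0$ is symmetric via $a_\ell = b_\ell y^{-\ell}$ and $y^n h = \sigma^n(h) y^n$. Hence $a = a_0 \in \Hf_0$.
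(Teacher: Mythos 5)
Your proposal is correct and follows essentially the same route as the paper: commutativity via Lemma~\ref{L:c:c}(e), maximality by computing $[h,\cdot]$ on (homogeneous components of) a centralizing element and using that $h-\sigma^\ell(h)\neq 0$ in the domain $\Hf$, and strictness by extracting the leading $x^k(\cdot)y^k$ part of $z^k$ to reduce $xhy\in\CC[z,h]$ to the impossible equation $\sigma(q)=h$. The only cosmetic difference is that you package the leading-term argument as an explicit filtration, whereas the paper states it as ``$z^k-x^ky^k$ lies in the span of $\{x^igy^i\mid i<k\}$.''
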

 
\begin{proof}
The first statement is a direct consequence of Lemma~\ref{L:c:c}(e). Assume now that $\degg f>1$. Then $\sigma$ is injective and has infinite order. For any $i, k\in \ZZ_{\geq 0}$ and $g\in\CC[h]$,  $[h, x^i g y^k]=x^i (\sigma^i(h)-\sigma^k(h))g y^k$. Hence, if $g\neq 0$, we deduce that $[h, x^i g y^k]=0\iff i=k$, and from this it is straightforward to conclude that $\Hf_0$ is the centralizer of $h$, hence a maximal commutative subalgebra of $\Hf$.

The commuting elements $h$ and $z$ are homogeneous of degree $0$ and are easily seen to be algebraically independent, as $z^k-x^ky^k$ is in the span of $\{ x^i g y^i \mid i<k,\ g\in\CC[h]\}$. Suppose, by contradiction, that there exist $g_k\in\CC[h]$ such that $xhy=\sum_{k\geq 0} g_k z^k$. Then by the argument above we must have $g_k=0$ for all $k>1$ and $\sigma(g_1)=h$, which is possible only if $\degg f=1$. Therefore $xhy\in\Hf_0\setminus \CC[z, h]$.
\end{proof}

By Lemma~\ref{L:c:c}(c)--(d), it is possible to extend $\sigma$ to a $\CC$-linear endomorphism $\tilde\sigma$ of $\Hf_0$ so that $\tilde\sigma(x^k g y^k)=x^k \sigma(g) y^k + x^{k-1} (\sigma^k (h)-h)g y^{k-1}$, for all $k\in\ZZ_{\geq 0}$ and $g\in\CC[h]$. For simplicity, we still denote this endomorphism by $\sigma$ instead of $\tilde\sigma$. By Lemma~\ref{L:c:c}(c)--(d) and Lemma~\ref{L:n:LZ}(a), $\sigma$ is defined by the relations:
\begin{equation}\label{E:lfd:sg:sigma}
\theta x=x \sigma (\theta), \quad\quad y\theta=\sigma(\theta)y, \quad \quad \mbox{for all $\theta\in\Hf_0$.}
\end{equation}
In particular, \eqref{E:lfd:sg:sigma} implies that $\sigma$ is an algebra endomorphism of $\Hf_0$.

\section{locally finite derivations of $\Hf$ when $\degg f>1$}\label{S:lfd}

Henceforth we will assume that $\degg f>1$. By Corollary~\ref{C:n:gdua} we are assuming that $\Hf$ is not a generalized down-up algebra. Most of our subsequent results do not hold if $\degg f\leq 1$.

Our goal in this section is to determine all locally finite derivations of $\Hf$. In particular, we will classify all $\ZZ$-gradings of $\Hf$ and show that $\Hf$ has no nontrivial locally nilpotent derivations. Our methods are akin to those used in \cite{SAV15}.

Let $\delta$ be a $\CC$-linear endomorphism of $\Hf$. We recall the following standard definitions:
\begin{itemize}
\item $\delta$ is a derivation of $\Hf$ if $\delta(ab)=\delta(a)b+a\delta(b)$;
\item $\delta$ is locally finite if for every $a\in\Hf$ the $\CC$-linear span of $\{ \delta^k(a) \mid k\in\ZZ_{\geq 0}\}$ is finite dimensional;
\item $\delta$ is locally nilpotent if for every $a\in\Hf$ there is $k\in\ZZ_{\geq 0}$ such that $\delta^k(a)=0$;
\item $\delta$ is homogeneous of degree $r\in\ZZ$ if $\delta \left(\Hf_\ell\right)\subseteq \Hf_{\ell+r}$ for all $\ell\in\ZZ$.
\end{itemize}

Assume $\delta$ is any derivation of $\Hf$. Since $\Hf$ is finitely generated, there exist homogeneous derivations $\delta_1, \ldots, \delta_k$ of strictly increasing degrees such that $\delta=\delta_1+\cdots +\delta_k$. Moreover, as seen in \cite[Lemma 1.1]{SAV15}, if $\delta$ is locally finite, then so are $\delta_1$ and $\delta_k$, and if $\delta_1$ (respectively, $\delta_k$) is of nonzero degree, then it must be locally nilpotent. 

We need one final definition. Given a locally nilpotent derivation $\delta$ and $a\in\Hf$, define
\begin{equation*}
\deg_\delta (a)=\max \{ k\in\ZZ_{\geq 0} \mid \delta^{k}(a)\neq 0\} \quad \mbox{if $a\neq 0$;}
\end{equation*}
define also $\deg_\delta(0)=-\infty$. It can be easily checked (see for example \cite{lMLnotes}) that for $a, b\in\Hf$, $\deg_\delta(a+b)\leq \max\{\deg_\delta(a), \deg_\delta(b)\}$, with equality if $\deg_\delta(a)\neq\deg_\delta(b)$. Since $\Hf$ is a domain and $\CC$ has characteristic $0$, we also have, from the Leibniz rule, $\deg_\delta(ab)= \deg_\delta(a)+\deg_\delta(b)$. In particular, $\ker\delta$ is factorially closed: if $\delta(ab)=0$ for some nonzero $a, b\in\Hf$, then $\delta(a)=0=\delta(b)$.

\begin{lemma}\label{L:lfd:main}
Assume that $\degg f>1$. Then all locally finite derivations of $\Hf$ are homogeneous of degree $0$. 
\end{lemma}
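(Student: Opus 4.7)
The plan is to reduce the claim to showing that every nonzero homogeneous locally nilpotent derivation of nonzero degree must vanish, and then to kill such a derivation by tracking the $\deg_\delta$-valuation along the defining relations of $\Hf$. First I would decompose $\delta=\delta_1+\cdots+\delta_k$ into homogeneous components of strictly increasing degrees as recalled in the excerpt. Since $\delta_1$ and $\delta_k$ are both locally finite, and are forced to be locally nilpotent whenever their degree is nonzero, it suffices to show that every nonzero homogeneous locally nilpotent derivation $\delta$ of degree $r\neq 0$ is identically zero.

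Fix such a $\delta$. Because $\delta$ is a derivation and $z$ is central, the identity $\delta(z)b=\delta(zb)-z\delta(b)=\delta(bz)-z\delta(b)=b\delta(z)$ shows that $\delta(z)$ commutes with every $b\in\Hf$ and therefore lies in $Z(\Hf)=\CC[z]\subseteq\Hf_0$, by Lemma~\ref{L:n:LZ}(c). Since $\delta(z)\in\Hf_r$ with $r\neq 0$, this forces $\delta(z)=0$.

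Next, applying $\deg_\delta$ to the relation $hx=xf(h)$ and using the additivity $\deg_\delta(ab)=\deg_\delta(a)+\deg_\delta(b)$ already recorded in the excerpt, I obtain $\deg_\delta(h)=\deg_\delta(f(h))$. If $\delta(h)\neq 0$, then $\deg_\delta(h)\geq 1$ and the values $\deg_\delta(h^i)=i\,\deg_\delta(h)$ are pairwise distinct, so the ``equality when different'' rule for sums gives $\deg_\delta(f(h))=(\degg f)\cdot\deg_\delta(h)>\deg_\delta(h)$, contradicting the equality just obtained because $\degg f>1$. Hence $\delta(h)=0$. Combining this with $\delta(z)=0$ and the factorization $xy=z+h$ yields $\delta(xy)=0$, so $\deg_\delta(x)+\deg_\delta(y)=\deg_\delta(xy)=0$, which forces $\delta(x)=\delta(y)=0$ and therefore $\delta=0$, the desired contradiction.

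The main obstacle I anticipate is not a deep one: it lies in setting up cleanly the additivity of the $\deg_\delta$-valuation in this noncommutative setting, which rests on the noncommutative Leibniz formula together with the domain property of $\Hf$ from Lemma~\ref{L:n:LZ}(b). Once this is in place the argument is a transparent exploitation of three ingredients: the center is concentrated in degree $0$, the relation $hx=xf(h)$ forces $\deg_\delta(h)$ to satisfy $\deg_\delta(h)=\degg f\cdot\deg_\delta(h)$, and the factorization $xy=z+h$ then traps both $\deg_\delta(x)$ and $\deg_\delta(y)$.
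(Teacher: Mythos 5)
Your proof is correct, and its overall architecture coincides with the paper's: the same reduction to showing that a homogeneous locally nilpotent derivation $\delta$ of degree $r\neq 0$ vanishes, and the same use of $\deg_\delta$-additivity on the relation $hx=xf(h)$ to force $\delta(h)=0$. Where you diverge is in the endgame. The paper first conjugates by the anti-automorphism $\iota$ to assume $r>0$, produces a nonzero element of $\ker\delta$ of positive degree, and uses that $\Hf_{\ell}=x^{\ell}\Hf_0$ together with factorial closedness to get $\delta(x)=0$; it then invokes local nilpotence of $\delta$ on $Z(\Hf)=\CC[z]$ to get $\delta(z)\in\CC$, hence $\delta(z)=0$ by degree, and finally $0=x\delta(y)$. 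You instead observe directly that $\delta(z)$ is central, hence lies in $\CC[z]\subseteq\Hf_0$, while also lying in $\Hf_r$, so $\delta(z)=0$ without any appeal to local nilpotence on the center or to $\iota$; then $xy=z+h$ gives $\deg_\delta(x)+\deg_\delta(y)=0$ and kills $\delta(x)$ and $\delta(y)$ simultaneously. Your variant is slightly more economical and symmetric in $x$ and $y$; the paper's variant isolates the fact that $\ker\delta$ meets $x\Hf$, which it reuses in spirit elsewhere. Both are complete; just make sure you state explicitly (as you do) that $\Hf$ is a domain, so that $xy\neq 0$ and the additivity $\deg_\delta(ab)=\deg_\delta(a)+\deg_\delta(b)$ applies.
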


\begin{proof}
Let $\delta$ be a locally finite derivation of $\Hf$. By the above decomposition $\delta=\delta_1+\cdots +\delta_k$ of $\delta$ into homogeneous derivations of strictly increasing degrees, it will be enough to show that there are no nonzero homogeneous locally nilpotent derivations of $\Hf$ of degree $r\neq 0$. 

So assume $\delta$ is a homogeneous locally nilpotent derivation of $\Hf$ of degree $r\neq 0$. Let $d=\deg_\delta(h)$ and suppose that $d>0$. Then $\deg_\delta (f(h))=d\degg f$ and  the relation $hx=xf(h)$ yields
\begin{equation*}
d+\deg_\delta(x)=\deg_\delta(x)+d\degg f,
\end{equation*}
so $\degg f=1$, which contradicts our hypothesis. Hence $d=0$ and $\delta(h)=0$.

By replacing $\delta$ with $\iota\delta\iota^{-1}$, where $\iota$ is the anti-automorphism defined in \eqref{E:iota}, we can assume that $r>0$. Then $\ker\delta$ contains some nonzero homogeneous element of positive degree. Since elements of $\Hf$ of positive degree lie in $x\Hf$ and $\ker\delta$ is factorially closed, we deduce that $\delta(x)=0$.

Any derivation maps the center of an algebra into itself, so $\delta$ restricts to a locally nilpotent derivation of $\CC[z]$, by Lemma~\ref{L:n:LZ}(c), and thus $\delta(z)\in\CC$. On the other hand, since $z=xy-h$ is homogeneous of degree $0$ and $\delta$ has positive degree, it must be that $\delta(z)=0$, and from $0=\delta(z)=x\delta(y)$, we conclude that $\delta(y)=0$. Then $\delta=0$ and the lemma is proved.
\end{proof}

The next theorem, our main result on derivations of $\Hf$ when $\degg f>1$, shows that the space of locally finite derivations of $\Hf$ is one-dimensional over $\CC$, spanned by the derivation $\partial$ defined by
\begin{equation}\label{E:lfd:partial}
\partial(x^i h^j y^k)=(i-k)x^i h^j y^k,\quad \mbox{for all $i, j, k\in \ZZ_{\geq 0}$.}
\end{equation}

\begin{thm}\label{T:lfd:main}
Assume that $\degg f>1$.  If $\delta$ is a locally finite derivation of $\Hf$, then there is $\lambda\in\CC$ such that $\delta(x)=\lambda x$, $\delta(y)=-\lambda y$ and $\delta(h)=0$.
\end{thm}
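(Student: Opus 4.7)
By Lemma~\ref{L:lfd:main}, $\delta$ is homogeneous of degree $0$, so $\delta(h)\in\Hf_0$, and we may write $\delta(x)=x\alpha$ and $\delta(y)=\beta y$ with $\alpha,\beta\in\Hf_0$. My plan has four steps: (i) show $\delta(h)=0$; (ii) show $\beta=-\alpha$; (iii) show that $\delta$ vanishes on $\Hf_0$; (iv) use local finiteness to conclude $\alpha\in\CC$.

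For (i), applying $\delta$ to $hx=xf(h)$ and using $\delta(f(h))=f'(h)\delta(h)$, the relation $hx=xf(h)$ itself, the commutativity of $\Hf_0$, and $\theta x=x\sigma(\theta)$ from \eqref{E:lfd:sg:sigma}, the identity collapses to $\sigma(\delta(h))=f'(h)\,\delta(h)$ in $\Hf_0$. Writing $\delta(h)=\sum_{k=0}^N x^k a_k(h) y^k$ with $a_N\neq 0$ and extracting the top component in the filtration $\bigoplus_{k\leq N}x^k\CC[h]y^k$ via the formula $\sigma(x^k a_k y^k)=x^k\sigma(a_k)y^k+x^{k-1}(\sigma^k(h)-h)a_k y^{k-1}$, one obtains the polynomial identity $\sigma(a_N)=\sigma^N(f')\cdot a_N$ in $\CC[h]$. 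Comparing degrees forces $\degg a_N=(\degg f)^N$, and then comparing leading coefficients gives $1=\degg f$, contradicting $\degg f>1$. Hence $\delta(h)=0$.

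For (ii), applying $\delta$ to $yx-xy=f(h)-h$ and simplifying with (i), the rules $\beta x=x\sigma(\beta)$, $y\alpha=\sigma(\alpha)y$, $yx=xy+f(h)-h$, and the commutativity of $\Hf_0$, reduces everything to the identity $x\bigl(\sigma(\gamma)-\gamma\bigr)y+\gamma\bigl(f(h)-h\bigr)=0$ in $\Hf_0$, where $\gamma=\alpha+\beta$. Writing $\gamma=\sum_k x^k c_k(h) y^k$ in the basis of $\Hf_0$ and matching the $x^j(\cdots)y^j$ components, the $j=0$ piece forces $c_0=0$; the relation for $j\geq 1$ simplifies to $c_j(\sigma^{j+1}(h)-h)=c_{j-1}-\sigma(c_{j-1})$, and induction on $j$ (using $\sigma^{j+1}(h)-h\neq 0$ since $\sigma$ has infinite order, cf.\ Proposition~\ref{P:lfd:sg:hoiscom}) yields $c_j=0$ for all $j$. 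Thus $\gamma=0$, i.e., $\beta=-\alpha$.

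For (iii), a straightforward induction using the Leibniz rule and the commutation rules $\alpha x=x\sigma(\alpha)$, $y\alpha=\sigma(\alpha)y$ gives $\delta(x^k)=x^k\bigl(\sum_{i=0}^{k-1}\sigma^i(\alpha)\bigr)$ and $\delta(y^k)=-\bigl(\sum_{i=0}^{k-1}\sigma^i(\alpha)\bigr)y^k$; together with $\delta(h)=0$ and commutativity of $\Hf_0$, this yields $\delta(x^k g(h) y^k)=0$ for every $k\geq 0$ and every $g\in\CC[h]$. Hence $\delta|_{\Hf_0}=0$; in particular $\delta(\alpha)=0$, so by induction $\delta^n(x)=x\alpha^n$. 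For (iv), local finiteness of $\delta$ then forces $\{\alpha^n\}_{n\geq 0}$ to span a finite-dimensional subspace of the commutative domain $\Hf_0$, so $\alpha$ is algebraic over the algebraically closed field $\CC$, and therefore $\alpha=\lambda\in\CC$. The main obstacle is the leading-coefficient contradiction in step (i): the twisted action of $\sigma$ on $\Hf_0$ has to be carefully unwound and the exact leading coefficient of $\sigma^N(f')\in\CC[h]$ computed in order to derive $1=\degg f$.
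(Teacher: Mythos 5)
Your proof is correct. It follows the same skeleton as the paper's --- reduce to a degree-$0$ derivation via Lemma~\ref{L:lfd:main}, prove $\delta(h)=0$ and $\theta_x+\theta_y=0$, show $\delta$ kills $\Hf_0$, and finish with local finiteness --- but the two middle steps are carried out by genuinely different means. The paper first invokes the center: since $\delta$ preserves $\CC[z]$ (Lemma~\ref{L:n:LZ}(c)), the degree-zero coefficient $g_0$ of $\theta_h$ has $\degg g_0\le 1$ and $\theta_h$ is tied to $\theta_x+\theta_y$ via $\delta(z)$; it then kills the $g_k$ by a bottom-up recursion and recovers $\theta_x+\theta_y=0$ from \eqref{E:lfd:main:z}. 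You solve the same functional equation $\sigma(\theta_h)=f'(h)\theta_h$ from the top instead: the $x^N(\,\cdot\,)y^N$ component gives $\sigma(a_N)=\sigma^N(f')a_N$, the degree count gives $\degg a_N=n^N$ (with $n=\degg f$), and the leading coefficients give $c\,a^{n^N}=n\,a^{n^N}c$, hence $n=1$; I checked that the leading coefficient of $\sigma^N(f')$ is indeed $n\,a^{n^N}$, so this is sound. You then obtain $\theta_x+\theta_y=0$ by differentiating $[y,x]=f(h)-h$ and running the componentwise recursion $c_j(\sigma^{j+1}(h)-h)=c_{j-1}-\sigma(c_{j-1})$ starting from $c_0=0$, which also checks out since $\sigma^{j+1}(h)\neq h$. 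What your route buys is independence from the computation of $Z(\Hf)$: you never need Lemma~\ref{L:n:LZ}(c) nor the fact that a locally finite derivation preserves $\CC\oplus\CC z$. What the paper's route buys is that both halves of its Claim~1 fall out of the single identity \eqref{E:lfd:main:z} once $\theta_h=0$ is known. Your steps (iii) and (iv) coincide with the paper's Claim~2 and closing argument, with the useful explicit remark that an element of the commutative domain $\Hf_0$ that is algebraic over $\CC$ must lie in $\CC$.
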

\begin{proof}
Let $\delta$ be a locally finite derivation of $\Hf$. By Lemma~\ref{L:lfd:main}, we know that $\delta$ is homogeneous of degree $0$, so there are $\theta_x, \theta_h, \theta_y\in\Hf_0$ so that
\begin{equation*}
\delta(x)=x\theta_x, \quad  \delta (h)=\theta_h, \quad\mbox{and}\quad \delta(y)=\theta_y y.
\end{equation*}
In particular, since $h$ commutes with $\theta_h$, we have $\delta(g(h))=g'(h)\theta_h$ for all $g(h)\in\CC[h]$, where $g'(h)$ denotes the derivative of $g(h)$ with respect to $h$. 

\medskip 

\textit{Claim 1:} {\it $\theta_h=0$ and $\theta_x+\theta_y=0$.}

\smallskip
\textit{Proof of Claim 1:}  Write 
\begin{equation}\label{E:lfd:main:dt2}
\theta_h=\sum_{k\geq 0}x^kg_k(h) y^k,
\end{equation}
with $g_k(h)\in\CC[h]$ and $g_k(h)=0$ except for finitely many indices $k$.

As observed in the proof of Lemma~\ref{L:lfd:main}, $\delta$ restricts to a locally finite derivation of $\CC[z]$, the center of $\Hf$, and thus $\delta(z)\in\CC\oplus\CC z$, say $\delta (z)=\mu z-\lambda$, with $\lambda, \mu\in\CC$. Since $\mu z-\lambda=\delta (xy-h)=x(\theta_x+\theta_y)y-\theta_h$, we have 
\begin{equation}\label{E:lfd:main:z}
\theta_h=x(\theta_x+\theta_y)y-\mu z+\lambda=x(\theta_x+\theta_y -\mu)y+\mu h+\lambda.
\end{equation}
In particular, $g_0(h)=\mu h+\lambda$.

We now apply $\delta$ to the relation $yh=f(h)y$ and get $\theta_y yh +y\theta_h=f'(h)\theta_h y+f(h)\theta_y y$. As $h$ and $\theta_y$ commute, and $y\theta_h=\sigma(\theta_h)y$, by \eqref{E:lfd:sg:sigma}, we obtain
\begin{equation}\label{E:lfd:main:t2comm}
\sigma(\theta_h)=f'(h)\theta_h .
\end{equation}
Now combining \eqref{E:lfd:main:dt2} and \eqref{E:lfd:main:t2comm} we deduce that, for every $k\geq 0$:
\begin{equation}\label{E:lfd:main:t2rels}
\sigma^k(f'(h))g_k(h)=\sigma(g_k(h)) + (\sigma^{k+1} (h)-h)g_{k+1}(h).
\end{equation}
Setting $k=0$ in \eqref{E:lfd:main:t2rels} we obtain $(f(h)-h)g_{1}(h)=f'(h)g_0(h)-\sigma(g_0(h))$. Since we have already established that $\degg g_0\leq 1$, we deduce now from the latter equation that $\degg (f(h)-h)g_{1}(h)\leq \degg f$, and thus $g_1\in\CC$. Combining this with the $k=1$ case of \eqref{E:lfd:main:t2rels}, $\sigma(f'(h))g_1(h)=\sigma(g_1(h)) + (\sigma (f(h))-h)g_{2}(h)$, yields $g_2=0$, and in turn the latter gives $g_k=0$ for all $k\geq 2$. Using again the relation $\sigma(f'(h))g_1(h)=\sigma(g_1(h)) + (\sigma (f(h))-h)g_{2}(h)$ with $g_2=0$ and $g_1\in\CC$ gives $g_1=0$. Therefore we have
\begin{equation}\label{E:lfd:main:g0}
\sigma (g_0)=f'(h)g_0.
\end{equation}
Suppose $g_0\neq 0$, and let $a$ be the leading coefficient of $f(h)$. Then $\mu\neq 0$  and comparing leading coefficients in \eqref{E:lfd:main:g0} yields $\mu a=a(\degg f)\mu$, whence $\degg f=1$, which is a contradiction. Thus $g_0=0$. 

From the above we conclude that $\theta_h=\sum_{k\geq 0}x^kg_k(h) y^k=0$ and finally by \eqref{E:lfd:main:z} we get $\theta_x+\theta_y=0$, establishing Claim 1.

\medskip

So far we have shown that
\begin{equation*}
\delta(x)=x\theta_x, \quad  \delta (h)=0, \quad\mbox{and}\quad \delta(y)=-\theta_x y,
\end{equation*}
so it remains to be inferred that $\theta_x\in\CC$. 

\medskip 

\textit{Claim 2:} {\it $\delta(\theta)=0$, for all $\theta\in\Hf_0$.}

\smallskip
\textit{Proof of Claim 2:}  Since $\delta(g)=0$ for all $g\in\CC[h]$, it suffices to show that if $\theta\in\Hf_0$ and $\delta (\theta)=0$, then also $\delta (x\theta y)=0$. This follows easily using the fact that $\Hf_0$ is commutative, as proved in Proposition~\ref{P:lfd:sg:hoiscom}.

\medskip

From Claim 2 it follows that, for all $k\geq 0$, $\delta (\theta_x^k)=0$, which implies that $\delta^k(x)=x\theta_x^k$. As $\delta$ is locally finite, the span of $\{ \theta_x^k \mid k\in\ZZ_{\geq 0}\}$ must then be finite dimensional. This is possible only if $\theta_x\in\CC$, thus finishing the proof of the theorem.
\end{proof}

Since locally nilotent derivations are locally finite, we derive the following corollary.

\begin{cor}
Assume that $\degg f>1$. Then $\Hf$ has no nonzero locally nilpotent derivations. 
\end{cor}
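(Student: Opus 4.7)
The plan is to deduce the corollary directly from Theorem~\ref{T:lfd:main}. Since every locally nilpotent derivation is automatically locally finite, any locally nilpotent derivation $\delta$ of $\Hf$ must, by that theorem, take the form
\begin{equation*}
\delta(x)=\lambda x, \quad \delta(y)=-\lambda y, \quad \delta(h)=0,
\end{equation*}
for some $\lambda\in\CC$. The task then reduces to ruling out $\lambda\neq 0$.

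For this, I would simply iterate $\delta$ on the generator $x$: an easy induction (or just the fact that $\delta$ acts as the scalar $\lambda$ on the one-dimensional subspace $\CC x$) yields $\delta^k(x)=\lambda^k x$ for every $k\geq 0$. Local nilpotency forces $\delta^k(x)=0$ for some $k$, and since $x\neq 0$ in the domain $\Hf$ (by Lemma~\ref{L:n:LZ}(b)), this compels $\lambda^k=0$, hence $\lambda=0$. But then $\delta$ vanishes on the generating set $\{x,y,h\}$ and is therefore identically zero.

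There is essentially no obstacle here: the real work has already been done in Theorem~\ref{T:lfd:main}, and the corollary is a one-line consequence. The only thing worth noting is that one should invoke that $\Hf$ is a domain to conclude $x$ itself is nonzero as an element (and its scalar multiples $\lambda^k x$ vanish only when $\lambda=0$), which is where the hypothesis $\degg f>1$ re-enters via Lemma~\ref{L:n:LZ}(b).
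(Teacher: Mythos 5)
Your proof is correct and follows the same route as the paper, which derives the corollary in one line from Theorem~\ref{T:lfd:main} via the observation that locally nilpotent derivations are locally finite, the point being that $\delta^k(x)=\lambda^k x$ can only vanish if $\lambda=0$. (A minor remark: the nonvanishing of $x$ is already immediate from the basis in Lemma~\ref{L:n:LZ}(a), so the domain property is not really needed.)
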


Suppose that $\Hf=\bigoplus_{\alpha\in \CC} V_\alpha$ is a grading. Define the $\CC$-linear endomorphism $\delta$ of $\Hf$ by $\delta (v_\alpha)=\alpha v_\alpha$ for all $v_\alpha \in V_\alpha$ and all $\alpha\in \CC$. It is immediate to check that $\delta$ is a diagonalizable derivation of $\Hf$ whose eigenvalues are those $\alpha\in \CC$ such that $V_\alpha\neq (0)$. Conversely, if $\delta$ is a diagonalizable derivation, then $\delta$ determines a grading where $V_\alpha$ is the $\alpha$-eigenspace of $\delta$. Furthermore, diagonalizable derivations are clearly locally finite.

Thus, we deduce from Theorem~\ref{T:lfd:main} that, except for the trivial grading in which every element of $\Hf$ has degree $0$, $\Hf$ only admits the standard grading defined in \eqref{E:zgrading}, up to scaling by some integer. More precisely, we have:

\begin{cor}\label{C:lfd:zgrad}
Assume that $\degg f>1$. Then for any $\ZZ$-grading of $\Hf$, there is an integer $\ell\in\ZZ$ such that, relative to that grading, $x$ has degree $\ell$, $y$ has degree $-\ell$ and $h$ has degree $0$.
\end{cor}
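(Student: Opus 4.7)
The plan is to translate the $\mathbb{Z}$-grading into a derivation and then invoke Theorem~\ref{T:lfd:main}. Given a $\mathbb{Z}$-grading $\Hf = \bigoplus_{n\in\ZZ} V_n$, define $\delta \colon \Hf \to \Hf$ by setting $\delta(v) = n v$ for every $v \in V_n$ and extending linearly. As noted in the paragraph preceding the corollary, $\delta$ is a derivation of $\Hf$, and since $\delta$ is diagonalizable it is in particular locally finite.

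Now I would apply Theorem~\ref{T:lfd:main} to $\delta$: there exists $\lambda \in \CC$ such that $\delta(x) = \lambda x$, $\delta(y) = -\lambda y$ and $\delta(h) = 0$. The key extra observation — which upgrades the conclusion from a complex scalar to an integer — is that the eigenvalues of $\delta$ are exactly those $n \in \ZZ$ with $V_n \neq (0)$, by construction. In particular, any eigenvalue of $\delta$ on a nonzero eigenvector must be an integer. Since $x \neq 0$, we get $\lambda = \ell$ for some $\ell \in \ZZ$.

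Finally, the three identities $\delta(x) = \ell x$, $\delta(y) = -\ell y$, $\delta(h) = 0$ are precisely the statement that $x \in V_\ell$, $y \in V_{-\ell}$ and $h \in V_0$, which is the desired conclusion. There is no serious obstacle here, as all the work has been done in Theorem~\ref{T:lfd:main}; the only content beyond invoking that theorem is the trivial remark that a $\mathbb{Z}$-grading produces a derivation with integer spectrum, so the scalar $\lambda$ produced by the theorem is forced into $\ZZ$.
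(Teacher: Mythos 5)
Your proposal is correct and follows exactly the paper's argument: the paragraph preceding the corollary sets up the correspondence between gradings and diagonalizable (hence locally finite) derivations, and the corollary is then read off from Theorem~\ref{T:lfd:main} together with the observation that the eigenvalues of the grading derivation are integers. Nothing to add.
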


\section{Automorphisms of $\Hf$ when $\degg f>1$}\label{S:aut}

When $\deg f=1$ the algebra $\Hf$ is a Noetherian generalized down-up algebra, by Corollary~\ref{C:n:gdua}, and the automorphisms of the latter have been investigated in~\cite{CL09}. We continue to assume that $\degg f>1$ and note again that our results do not generalize to the cases with $\degg f\leq 1$.

Since $\Hf$ has no nonzero locally nilpotent derivations, it seems natural to conjecture that the automorphism group of $\Hf$ is somewhat small. However, over $\CC$ we can consider also the exponential of a diagonalizable derivation. Specifically, let $c\in\CC$ and let $\partial$ be the derivation of $\Hf$ defined in \eqref{E:lfd:partial}. Then the expression
\begin{equation*}
\exp(c\partial) := \sum_{k=0}^\infty  \frac{(c\partial)^k}{k!}
\end{equation*}
defines an automorphism of $\Hf$ satisfying
\begin{equation*}
\exp(c\partial) (x) = \sum_{k=0}^\infty  \frac{c^k}{k!}x=\exp(c)x, \quad \exp(c\partial) (y)=\exp(-c)y, \quad \exp(c\partial) (h) = h,
\end{equation*}
with inverse $\exp(-c\partial)$.

The above motivates the following definition. For each $\lambda\in\CC^*$, let $\phi_\lambda$ be the automorphism of $\Hf$ defined by
\begin{equation}\label{E:aut:defpl}
\phi_\lambda(x)=\lambda x, \quad \phi_\lambda(y)=\lambda^{-1} y, \quad \phi_\lambda(h)=h.
\end{equation}

The group of algebra automorphisms of $\Hf$ will be denoted by $\aut$. We have a first description of $\aut$ below.

\begin{prop}\label{P:aut:H}
Assume $\degg f> 1$. Then the following hold:
\begin{itemize}
\item[\textup{(a)}] Any automorphism of $\Hf$ restricts to an automorphisms of $\CC[h]$, and $x$ and $y$ are eigenvectors.
\item[\textup{(b)}] $\displaystyle \{\phi\in\aut \mid \phi(h)=h\}=\{\phi_\lambda\mid \lambda\in\CC^*\}\cong\CC^*$, and this is a central subgroup of $\aut$.
\item[\textup{(c)}] $\displaystyle \{\phi\in\aut \mid \phi(x)=x\}$ is a finite cyclic subgroup whose order divides $(\degg f)-1$.
\end{itemize}
\end{prop}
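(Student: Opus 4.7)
For part (a), I would pull back the standard $\ZZ$-grading \eqref{E:zgrading} through $\phi$: by Corollary~\ref{C:lfd:zgrad}, there exists a nonzero integer $\ell$ with $\phi(x)\in\Hf_\ell$, $\phi(y)\in\Hf_{-\ell}$, and $\phi(h)\in\Hf_0$. Writing $\phi(h)=\sum_{k\geq 0}x^kg_k(h)y^k$ and applying $\phi$ to $hx=xf(h)$, using the rules $\theta x=x\sigma(\theta)$ and $y\theta=\sigma(\theta)y$ for $\theta\in\Hf_0$ from \eqref{E:lfd:sg:sigma}, one reduces (in the case $\ell>0$, symmetrically for $\ell<0$) to the identity $\sigma^\ell(\phi(h))=f(\phi(h))$ in $\Hf_0$. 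Filtering $\Hf_0$ by $F'_n=\bigoplus_{k\leq n}x^k\CC[h]y^k$, Lemma~\ref{L:c:c} shows that $\sigma$ preserves this filtration degree while the polynomial evaluation $p\mapsto f(p)$ multiplies the top $F'$-index by $d=\degg f>1$; hence $K=\max\{k:g_k\neq 0\}$ satisfies $K=dK$, forcing $K=0$ and $\phi(h)\in\CC[h]$. Running the same argument for $\phi^{-1}$ gives $\phi(\CC[h])=\CC[h]$, so $\phi|_{\CC[h]}$ is an automorphism of $\CC[h]$ and $\phi(h)=ah+b$ with $a\in\CC^*$, $b\in\CC$.

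Substituting back, the relation reads $af^\ell(h)+b=f(ah+b)$ when $\ell>0$, whose degree comparison gives $d^\ell=d$ and hence $\ell=1$; the analogous identity for $\ell<0$ compares degrees as $1=d^{|\ell|+1}$, which is impossible. Writing $\phi(x)=x\theta_1$ and $\phi(y)=\theta_2y$ with $\theta_1,\theta_2\in\Hf_0$, I would then apply $\phi$ to $yx-xy=f(h)-h$ and expand using Lemma~\ref{L:c:c} to obtain $x\bigl[\sigma(\theta_1\theta_2)-\theta_1\theta_2\bigr]y=(a-\theta_1\theta_2)(f(h)-h)$. Expanding $\theta_1\theta_2=\sum_{k\geq 0}x^ku_k(h)y^k$, the $\CC[h]$-component forces $u_0=a$; comparing successive $x^ky^k$-components yields a recursion whose coefficient at step $k$ is the nonzero polynomial $f^{k+1}(h)-h$, forcing $u_k=0$ for all $k\geq 1$. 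Thus $\theta_1\theta_2=a\in\CC^*$, making $\theta_1,\theta_2$ units in $\Hf$. A leading-term analysis in the $\ZZ$-graded domain $\Hf$ (using the same $F'$-filtration on $\Hf_0$) shows that the units of $\Hf$ are precisely $\CC^*$, so $\theta_1,\theta_2\in\CC^*$ and $x,y$ are eigenvectors of $\phi$.

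For (b), if $\phi(h)=h$, then part (a) gives $\phi(x)=\lambda x$ and $\phi(y)=\mu y$, and applying $\phi$ to $yx-xy=f(h)-h$ forces $\lambda\mu=1$; hence $\phi=\phi_\lambda$ and this subgroup equals $\{\phi_\lambda\mid\lambda\in\CC^*\}\cong\CC^*$. Centrality is direct: for any $\psi\in\aut$, part (a) writes $\psi(x)=\alpha x$, $\psi(y)=\beta y$, $\psi(h)=\gamma h+\delta$, and both $\phi_\lambda\psi$ and $\psi\phi_\lambda$ act on the generators as $x\mapsto\lambda\alpha x$, $y\mapsto\lambda^{-1}\beta y$, $h\mapsto\gamma h+\delta$. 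For (c), if $\phi(x)=x$, then part (a) gives $\phi(h)=ah+b$ and $\phi(y)=\beta y$; applying $\phi$ to $hx=xf(h)$ yields $f(ah+b)=af(h)+b$, whose $h^d$-coefficient forces $a^{d-1}=1$ and whose $h^{d-1}$-coefficient then uniquely determines $b$ in terms of $a$, while $yx-xy=f(h)-h$ gives $\beta=a$. Thus $\phi\mapsto a$ is an injective group homomorphism from $\{\phi\in\aut\mid\phi(x)=x\}$ into the cyclic group of $(d-1)$-th roots of unity, and its image is a finite cyclic subgroup whose order divides $d-1$.

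The main obstacle is the first step of (a): showing that $\phi(h)\in\CC[h]$ rather than merely an element of the maximal commutative subalgebra $\Hf_0$. This requires the $F'$-filtration analysis on $\Hf_0$ and crucially exploits $\degg f>1$; once this is secured, the remaining parts of (a), together with (b) and (c), reduce to careful but routine manipulation of the defining relations and Lemma~\ref{L:c:c}.
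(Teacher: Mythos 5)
Your proposal is correct; parts (b) and (c) coincide with the paper's argument (the paper obtains $a^{n-1}=1$ and the formula for $b$ by differentiating $f(ah+b)=af(h)+b$ exactly $n-1$ times, which is equivalent to your coefficient comparison). Part (a), however, takes a genuinely different route. The paper first proves $\phi(h)\in\CC[h]$ by invoking the argument of Claim~4 in the proof of \cite[Theorem 5]{LZ15} applied to $\phi(h)\phi(x)=\phi(x)f(\phi(h))$, then deduces $\phi(x)\in\Hf_1$ and $\phi(y)\in\Hf_{-1}$ by comparing the two sides of that relation term by term, and finally gets $\theta_x,\theta_y\in\CC^*$ by running the same reasoning on $\phi^{-1}$. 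You instead obtain homogeneity first, by pulling the standard grading back through $\phi$ and quoting Corollary~\ref{C:lfd:zgrad} (legitimate and non-circular, since that corollary is proved in Section~\ref{S:lfd}), then place $\phi(h)$ in $\CC[h]$ via a leading-index argument in the filtration $F'_n=\bigoplus_{k\leq n}x^k\CC[h]y^k$ of $\Hf_0$, and extract the eigenvector statement from $[y,x]=f(h)-h$; I checked that your reduction to $x\bigl[\sigma(\theta_1\theta_2)-\theta_1\theta_2\bigr]y=(a-\theta_1\theta_2)(f(h)-h)$ and the ensuing recursion forcing $\theta_1\theta_2=a\in\CC^*$ are correct. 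Your route avoids the external appeal to \cite{LZ15} at the price of using the grading classification and two small facts you should spell out in a final write-up: that the top $F'$-index is additive on products in $\Hf_0$ (so that evaluation of $f$ really multiplies it by $\degg f$; this follows from $y^jx^k=x^ky^j+\text{lower terms}$ and injectivity of $\sigma$ on $\CC[h]$), and that the units of $\Hf$ are exactly $\CC^*$. Both are routine, so the proposal stands as a valid alternative proof.
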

\begin{proof}
Let $\phi$ be an automorphism of $\Hf$. Then as argued in Claim 4 of the proof of \cite[Theorem 5]{LZ15}, the relation $\phi(h)\phi(x)=\phi(x)f(\phi(h))$ with $\degg f > 1$ implies that $\phi(h)\in\CC[h]$; applying this result to $\phi^{-1}$ gives that $\phi(h)=ah+b$, for some $a, b\in\CC$ with $a\neq 0$.

Now writing $\phi(x)$ as a sum of terms of the form $x^i g_{i, j}y^j$ with $i, j\in\ZZ_{\geq 0}$ and $g_{i, j}\in\CC[h]$, and comparing the corresponding expressions for $\phi(h)\phi(x)$ and $\phi(x)f(\phi(h))$, we obtain $\phi(x)\in\Hf_1$. Similarly, $\phi(y)\in\Hf_{-1}$, so $\phi$ is homogeneous of degree $0$. Thus, there exist $\theta_x, \theta_y\in\Hf_0$ such that $\phi(x)=x\theta_x$ and $\phi(y)=\theta_y y$. Applying the same reasoning to $\phi^{-1}$, we deduce that $\theta_x, \theta_y\in\CC^*$, which proves (a).

Now assume $\phi\in\aut$ and $\phi(h)=h$. By (a) there exist $\lambda, \mu\in\CC^*$ such that $\phi(x)=\lambda x$ and $\phi(y)=\mu y$. Applying $\phi$ to the relation $[y, x]=f(h)-h$ yields $\lambda\mu=1$, so $\phi=\phi_\lambda$. This proves the equality in (b), and the isomorphism $\{\phi_\lambda\mid \lambda\in\CC^*\}\cong\CC^*$ is clear, as $\phi_\lambda\circ\phi_\mu=\phi_{\lambda\mu}$ for all $\lambda, \mu\in\CC^*$. 

Next, we show that the subgroup $\{\phi_\lambda\mid \lambda\in\CC^*\}$ is central in $\aut$. Let $\lambda\in\CC^*$, and suppose $\psi\in\aut$ is arbitrary. By (a) we know that $\psi(h)\in\CC[h]$, which implies that $\phi_\lambda\circ\psi(h)=\psi\circ\phi_\lambda(h)$. But as $x$ and $y$ are eigenvalues for any automorphism of $\Hf$, $\phi_\lambda\circ\psi$ and $\psi\circ\phi_\lambda$ also agree on these generators, and thus $\phi_\lambda\circ\psi=\psi\circ\phi_\lambda$.

To prove part (c), suppose that $\phi\in\aut$ and $\phi(x)=x$. We know already that $\phi(h)=ah+b$ and $\phi(y)=cy$, for some $a, b, c\in\CC$ with $a, c\neq 0$. Then $xf(ah+b)=x\phi(f(h))=\phi(h)x=(ah+b)x=x(af(h)+b)$, and we obtain
\begin{equation}\label{E:aut:H:f}
f(ah+b)=af(h)+b.
\end{equation}
Therefore, 
\begin{align*}
 c(f(h)-h) &=c[y, x]=\phi([y, x])=\phi(f(h)-h)=af(h)+b-(ah+b)\\&=a(f(h)-h),
\end{align*}
and we conclude that $c=a$.

Write $f(h)=\sum_{k=0}^{n}a_k h^k$, where $n=\degg f$ and $a_k\in\CC$. Applying the derivation $\frac{d}{dh}$ to \eqref{E:aut:H:f} $n-1$ times yields $a^{n-1}f^{(n-1)}(ah+b)=af^{(n-1)}(h)$, as $n-1\geq 1$. As $f^{(n-1)}(h)=(n-1)!(na_n h+a_{n-1})$, we obtain
\begin{equation}\label{E:aut:H:ab}
a^{n-1}=1 \quad \mbox{and}\quad b=\frac{(a-1)a_{n-1}}{na_n}.
\end{equation}

Let $\mathsf{U}_{n-1}=\{ \zeta\in\CC^*\mid \zeta^{n-1}=1 \}$ be the cyclic group of order $n-1$, and define a map
\begin{equation*}
\Psi: \{\phi\in\aut \mid \phi(x)=x\}\longrightarrow \mathsf{U}_{n-1}, \quad \phi\mapsto a,\ \ \mbox{where $\phi(h)=ah+b$.}
\end{equation*}
Then $\Psi$ is well defined by \eqref{E:aut:H:ab}, and it is a group homomorphism. If $\Psi(\phi)=1$ for some $\phi\in\aut$ with $\phi(x)=x$, then the above shows that $\phi(y)=y$ and $\phi(h)=h+b$. Again by \eqref{E:aut:H:ab} we deduce that $b=0$, so $\phi$ is the identity on $\Hf$. This shows that $\Psi$ is an injective group homomorphism and thus $\{\phi\in\aut \mid \phi(x)=x\}$ is isomorphic to a subgroup of $\mathsf{U}_{n-1}$; hence it is a finite cyclic group whose order divides $n-1$.
\end{proof}

It is now an easy matter to determine the structure of $\aut$. The symbol $\dot{\times}$ used below denotes the internal direct product of subgroups of a group.

\begin{thm}\label{T:aut:main}
Assume $\degg f>1$. Then 
\begin{equation}\label{E:aut:main}
\aut=\{\phi_\lambda\mid \lambda\in\CC^*\}\dot{\times}\{\phi\in\aut \mid \phi(x)=x\}
\end{equation}
is an abelian group, where:
\begin{itemize}
\item $\{\phi_\lambda\mid \lambda\in\CC^*\}\cong\CC^*$ and $\phi_\lambda$ is defined in \eqref{E:aut:defpl};
\item $\{\phi\in\aut \mid \phi(x)=x\}$ is a finite cyclic group whose order divides $(\degg f)-1$ and which, as a set, can be identified with $\{ (a, b)\in\CC^*\times\CC \mid f(ah+b)=af(h)+b \}$ via the correspondence $\phi\mapsto (a, b)$, where $\phi(h)=ah+b$.
\end{itemize}
\end{thm}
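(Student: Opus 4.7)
The plan is to assemble the theorem from the groundwork in Proposition~\ref{P:aut:H}. The main work has already been done there: every automorphism has $\phi(x)=\lambda x$, $\phi(y)=\mu y$ for some $\lambda,\mu\in\CC^*$, and $\phi(h)=ah+b$ for some $a\in\CC^*$, $b\in\CC$. The theorem statement has three parts: the internal direct product decomposition \eqref{E:aut:main}, the commutativity of $\aut$, and the identification of the finite factor with pairs $(a,b)$ satisfying $f(ah+b)=af(h)+b$.

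First I would establish that the two subgroups have trivial intersection and that their product is all of $\aut$. Trivial intersection is immediate: if $\phi_\lambda(x)=x$ then $\lambda x=x$, forcing $\lambda=1$, so $\phi_\lambda$ is the identity. For the product equaling $\aut$, take any $\phi\in\aut$. By Proposition~\ref{P:aut:H}(a), $\phi(x)=\lambda x$ for some $\lambda\in\CC^*$. Setting $\psi=\phi_{\lambda^{-1}}\circ\phi$, one has $\psi(x)=\phi_{\lambda^{-1}}(\lambda x)=x$, so $\psi\in\{\phi\in\aut\mid \phi(x)=x\}$ and $\phi=\phi_\lambda\circ\psi$. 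This proves \eqref{E:aut:main}.

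Next, commutativity follows from the fact that $\{\phi_\lambda\mid\lambda\in\CC^*\}$ is central in $\aut$ by Proposition~\ref{P:aut:H}(b), combined with the fact that the second subgroup is cyclic (hence abelian) by Proposition~\ref{P:aut:H}(c); an internal direct product of an abelian subgroup with a central subgroup is abelian.

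Finally, for the set-theoretic identification with $\{(a,b)\in\CC^*\times\CC\mid f(ah+b)=af(h)+b\}$, I would proceed in both directions. If $\phi\in\aut$ satisfies $\phi(x)=x$, then $\phi(h)=ah+b$ and relation \eqref{E:aut:H:f} from the proof of Proposition~\ref{P:aut:H}(c) yields $f(ah+b)=af(h)+b$, while the same proof shows $\phi(y)=ay$, so $\phi$ is completely determined by $(a,b)$. Conversely, given such a pair $(a,b)$ with $f(ah+b)=af(h)+b$, I would define a $\CC$-algebra endomorphism by $x\mapsto x$, $h\mapsto ah+b$, $y\mapsto ay$ and verify that the three defining relations of $\Hf$ are preserved — each reduces to the hypothesis $f(ah+b)=af(h)+b$ after a short direct computation. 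The endomorphism is then an automorphism because the pair $(a^{-1},-a^{-1}b)$ also satisfies the constraint (substitute $h\mapsto a^{-1}h-a^{-1}b$ into the relation) and defines the inverse. The map $\phi\mapsto(a,b)$ is then the claimed bijection of sets. I expect no real obstacle; the only point requiring care is the routine but slightly tedious verification that the three relations are respected by the assignment $x\mapsto x$, $h\mapsto ah+b$, $y\mapsto ay$, which uses the constraint $f(ah+b)=af(h)+b$ in each case.
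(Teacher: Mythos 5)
Your proposal is correct and follows essentially the same route as the paper: trivial intersection plus the factorization $\phi=\phi_\lambda\circ(\phi_{\lambda^{-1}}\circ\phi)$ for the direct product, centrality of $\{\phi_\lambda\}$ together with cyclicity of the stabilizer of $x$ for commutativity, and the correspondence $\phi\mapsto(a,b)$ via \eqref{E:aut:H:f} for the identification. Your explicit observation that the inverse corresponds to the pair $(a^{-1},-a^{-1}b)$ merely fills in the step the paper labels ``routine.''
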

\begin{proof}
Since we have already seen in Proposition~\ref{P:aut:H} that $\{\phi_\lambda\mid \lambda\in\CC^*\}$ is central, in order to prove the direct product decomposition in \eqref{E:aut:main}, it remains to show that the two subgroups have trivial intersection, which is clear, and generate $\aut$. Let $\psi\in\aut$. Then there is $\lambda\in\CC^*$ such that $\psi(x)=\lambda x$, whence $\phi^{-1}_\lambda\circ\psi\in\{\phi\in\aut \mid \phi(x)=x\}$, and this shows the latter claim. Moreover, since $\{\phi\in\aut \mid \phi(x)=x\}$ is abelian, by Proposition~\ref{P:aut:H}(c), the group $\aut$ must also be abelian.

The remaining parts of the theorem have already been proved, except for the observation that $\{\phi\in\aut \mid \phi(x)=x\}$ can be identified with the set $\{ (a, b)\in\CC^*\times\CC \mid f(ah+b)=af(h)+b \}$. Indeed, if $\phi(x)=x$, then we have seen in the proof of Proposition~\ref{P:aut:H} that $\phi(h)=ah+b$ and $\phi(y)=ay$, for some $a, b\in\CC$ with $a\neq 0$, and \eqref{E:aut:H:f} must hold. This shows that the correspondence $\phi\mapsto (a, b)$ is well defined and one-to-one. Conversely, given $(a, b)\in\CC^*\times\CC$ satisfying $f(ah+b)=af(h)+b$, it is routine to check that there is an automorphism of $\Hf$ defined by the conditions $\phi(x)=x$, $\phi(y)=ay$, $\phi(h)=ah+b$, and this shows the correspondence is onto.
\end{proof}

\begin{remark}
Any pair $(a, b)\in\CC^*\times\CC$ satisfying $f(ah+b)=af(h)+b$ must also satisfy \eqref{E:aut:H:ab}, where $n=\degg f$, although the conditions in \eqref{E:aut:H:ab} are not sufficient (see the examples below). Thus, for each $(n-1)$-th root of unity $a$, the corresponding scalar $b$ is determined by \eqref{E:aut:H:ab}, but one still needs to check the relation $f(ah+b)=af(h)+b$ for the pair $(a, b)$.
\end{remark}

\begin{exams}\hfill
\begin{itemize}
\item [(a)] If $\degg f=2$, then $n=2$ in \eqref{E:aut:H:ab}, so $a=1$ and $b=0$, and the pair $(1, 0)$ corresponds to the identity map. It follows that the group $\{\phi\in\aut \mid \phi(x)=x\}$ is trivial and $\aut\cong\CC^*$.
\item [(b)] Let $f(h)=h^3+h$. Then $n=3$ in \eqref{E:aut:H:ab}, so $a=\pm1$. If $a=1$, then $b=0$, and the corresponding automorphism is the identity. If $a=-1$, then $b=0$, and in fact $f(-h)=-f(h)$. Therefore there is an automorphism $\phi$ of $\Hf$ such that $\phi(x)=x$, $\phi(y)=-y$, $\phi(h)=-h$, and $\aut\cong\CC^*\times\ZZ_2$.
\item [(c)] Let $f(h)=h^3+h+1$. Then $n=3$ in \eqref{E:aut:H:ab}, so $a=\pm1$. If $a=-1$, then \eqref{E:aut:H:ab} yields $b=0$, but $f(-h)\neq-f(h)$, so the group $\{\phi\in\aut \mid \phi(x)=x\}$ is trivial and $\aut\cong\CC^*$.
\item [(d)] Let $f(h)=h^n$, for $n>1$. Then \eqref{E:aut:H:ab} says that $a$ is a $(n-1)$-th root of unity and $b=0$. Moreover, $f(ah)=a^nf(h)=af(h)$ for any $(n-1)$-th root of unity $a$. Hence $\aut\cong\CC^*\times\ZZ_{n-1}$.
\item [(e)] Let $f(h)=h^n+1$, for $n>1$. Then, as before, $a$ is a $(n-1)$-th root of unity and $b=0$. However, in this case, $f(ah)=ah^n+1$ whereas $af(h)=ah^n+a$, so equality holds if and only if $a=1$. Hence $\aut\cong\CC^*$.
\item [(f)] Let $f(h)=h^n+h^{k+1}$, for some $n\geq 4$, and take any $1\leq k< n-1$ such that $k\mid n-1$. Then $a$ is a $(n-1)$-th root of unity and $b=0$. In this case $f(ah)=ah^n+a^{k+1}h^{k+1}$ whereas $af(h)= ah^n+ah^{k+1} $, so equality holds if and only if $a^k=1$. By the hypothesis that $k\mid n-1$, we deduce that $\aut\cong\CC^*\times\ZZ_{k}$.
\end{itemize}
\end{exams}

\def\cprime{$'$} \def\cprime{$'$} \def\cprime{$'$}

\noindent \textsc{Samuel A. Lopes} \\
\textit{\small CMUP, Faculdade de Ci\^encias, Universidade do Porto\\
Rua do Campo Alegre 687\\ 
4169-007 Porto, Portugal}\\
\texttt{slopes@fc.up.pt}

\end{document}